\newtheorem{theorem}{Theorem}[section]
\newtheorem*{theorem-non}{Theorem}
\newtheorem*{teorema-non}{Theorem}
\newtheorem{lemma}[theorem]{Lemma}
\newtheorem{teorema}[theorem]{Theorem}
\newtheorem*{proposition-non}{Proposition}
\newtheorem{prop}[theorem]{Proposition}
\newtheorem*{coro-non}{Corollary}
\newtheorem{definition}[theorem]{Definition}
\newtheorem{example}[theorem]{Example}
\newtheorem*{obs-non}{Observation}
\newtheorem*{conjecture-non}{Conjecture}
\theoremstyle{remark}
\newtheorem*{question-non}{Question}
\newtheorem{nota}[theorem]{}
\newtheorem{remark}[theorem]{Remark}
\def\cl{\overline}
\def\interior{\operatorname{int}}
\def\St{\operatorname{St}}
\def\R{\mathbb R}
\def\Q{\mathbb Q}
\def\Irr{\mathbb I}
\def\N{\mathbb N}
\def\B{\mathcal B}
\def\P{\mathcal P}
\def\AA{\mathcal A}
\def\T{\mathcal T}
\def\S{\mathcal S}
\def\RR{\mathcal R}
\def\L{\mathcal T}
\def\LL{\mathcal L}
\def\el2{\mathcal D}
\def\TRXY{\T:R(X)\to R(Y)}
\def\TBXY{\T:B_X\to B_Y}
\def\tauXY{\tau:X\to Y}
\def\tauX0Y{\tau:X_0\to Y_0}
\def\X0X{X_0\subset X}
\def\Y0Y{Y_0\subset Y}
\title{Order isomorphisms between bases of topologies}
\begin{document}

\author
{Javier~Cabello~Sánchez}
\address{Departamento de Matem\'{a}ticas, Universidad de Extremadura, 
Avda. de Elvas s/n, 06006 Badajoz. Spain}
% \email{coco@unex.es}

\thanks{e-mail address: coco@unex.es}

\thanks{Keywords: Lattices; complete metric spaces; locally compact spaces; open regular sets; 
partial ordered sets}
\thanks{Mathematics Subject Classification: 54E50, 
54H12}

\begin{abstract}
In this paper we will study the representations of isomorphisms 
between bases of topological spaces. It turns out that the perfect 
setting for this study is that of regular open subsets of complete metric 
spaces, but we have been able to show some results about arbitrary bases 
in complete metric spaces and also about regular open subsets in Hausdorff 
regular topological spaces. 
\end{abstract}

\maketitle

\section{Introduction}
Back in the 1930's, Stefan~Banach and Marshall~Stone proved one of the most 
celebrated results in Functional Analysis. The usual statement the reader 
can find of the Banach-Stone Theorem is, give or take, the following: 

\begin{theorem-non}
Let $X$ and $Y$ be compact Hausdorff spaces and let $T:C(X)\to C(Y)$ 
be a surjective linear isometry. Then there exist a homeomorphism 
$\tauXY$ and $g\in C(Y)$ such that $|g(y)|=1$ for all $y\in Y$ and
$(Tf)(y)=g(y)f(\tau(y))$ for all $y\in Y, f\in C(X)$.
\end{theorem-non}

The result is, however, much deeper. They were able to determine $X$ by means 
of the structure of $C(X)$, in the sense that $X$ turns out to be homeomorphic 
to the set of extreme points of the unit sphere of $(C(X))^*$ (after quotienting 
by the sign). 

Since then, similar results began to appear, as Gel'fand-Kolmogorov Theorem 
(\cite{GK}) or the subsequent works by Milgram, Kaplansky or Shirota, 
(\cite{Kap1, Kap2, Milgram, shirota}). In spite of this rapid development, after 
Shirota's 1952 work --which we will discuss later-- a standstill 
lasts until the last few years of the XXth century. Then the topic forks in 
two different ways. On the one hand, there are mathematicians that begin to suspect 
that the proof of~\cite[Theorem~6]{shirota} did not work, so they began 
to study lattice isomorphisms between spaces of uniformly continuous functions 
(see, e.g., \cite{GaJaU}). On the other hand, it begins to appear a significant 
amount of papers that deal with representation of isomorphisms between other 
spaces of functions or, in general, between subsets of $C(X)$ and $C(Y)$, 
see (\cite{FCPosit, GaJaH, GaJaExt, Grab, JVlLO, Mrcun}). 

Anyhow, the papers where we find some of the most accurate results about 
isomorphisms of spaces of uniformly continuous functions (\cite{CCshirota, JCSSharp}), 
Lipschitz functions (\cite{CClipschitz}) and smooth functions (\cite{CCsmooth}) 
have something in common: the result labelled in the present paper as 
Lemma~\ref{densos}; the interested reader should take a close look at~\cite{Leung}, 
where the authors were able to unify all these results and find new ones. This 
lemma has been key in these works, and has recently lead to similar results, 
see (\cite{JCSJAJA, JAJASmooth}). In the present paper we study Lemma~\ref{densos}, 
generalising it in two ways and providing a more accurate description of 
the isomorphisms between lattices of regular open sets in complete metric spaces. 

In the first part of the second section, we shall restrict ourselves to the 
study of complete metric spaces and order preserving bijections between arbitrary 
bases of their topologies. Namely, we will show that given a couple of complete 
metric spaces, say $X$ and $Y$, every order preserving bijection between bases 
of their topologies induces a homeomorphism between dense G$_\delta$ subspaces  
$X_0\subset X$ and $Y_0\subset Y$ --subspaces that can be endowed with (equivalent) 
metrics that turns them into complete spaces. Later, we restrict ourselves to the bases of 
regular open sets on the wider class of Hausdorff, regular topological spaces 
and show that whenever $X_0\subset X$ is dense, the lattices $R(X)$ and $R(X_0)$ 
of regular open subsets are naturally isomorphic and analyse some consequences 
of this. Joining both parts we get an explicit representation of every 
isomorphism between lattices of regular open sets in complete metric spaces that 
may be considered as the main result in this paper.

\begin{nota}
Apart from this Introduction, the present paper has Section~\ref{SMain}, where 
we prove the main results of the paper, and Section~\ref{skip}, that 
contains some remarks and applications of the main results. 
\end{nota}

\begin{nota}% [Notations and conventions]
In this paper, $X$ and $Y$ will always be topological spaces. 

We will denote the interior of $A\subset X$ as $\interior_X A$, unless the space 
$X$ is clear by the context, in which case we will just write $\interior A$. 
The same way, $\cl{A}^X$ or $\cl{A}$ will denote the closure of $A$ in $X$. 

We will denote by $R(X)$ the lattice of regular open subsets of $X$ and $\B_X$ 
will be a basis of the topology of $X$, please recall that an open subset $U$ 
of some topological space $X$ is regular if and only if $U=\interior \cl{U}$. 

We say that $\L:\B_X\to \B_Y$ is an isomorphism when it is a bijection that 
preserves inclusion, i.e., when $\L(U)\subset\L(V)$ is equivalent to $U\subset V$. 
\end{nota}

\section{The main result}\label{SMain}

In this Section we will prove our main result, Theorem~\ref{regulares}. 
Actually, it is just a consequence of Theorem~\ref{teoremav2} and 
Proposition~\ref{UX0}, but as both results are more general than 
Theorem~\ref{regulares} we have decided to separate them. We have split the proof 
in several intermediate minor results. 

\begin{lemma}\label{densos}
Let $(X,d_X)$ and $(Y,d_Y)$ be complete metric spaces or locally compact 
metric spaces and $\B_X, \B_Y$, bases of their topologies. Suppose there 
is an isomorphism $\T:\B_X\to \B_Y$. Then, there exist 
dense subspaces $X_0\subset X$ and $Y_0\subset Y$ and a homeomorphism 
$\tau:X_0\to Y_0$ such that $\tau(x)\in\T(U)$ if and only if $x\in U\cap X_0$. 
\end{lemma}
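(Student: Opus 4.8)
The plan is to reconstruct the points of $X$ and $Y$ from the combinatorics of the order and then read $\tau$ off incidence. The first step is to show that $\T$ transports all the topologically relevant relations between basis elements. Disjointness is immediate: $U\cap V\neq\emptyset$ precisely when some nonempty $W\in\B_X$ satisfies $W\subseteq U$ and $W\subseteq V$ (nonemptiness being itself order-theoretic, via the least element when $\emptyset\in\B_X$, and automatic otherwise), so $U\cap V=\emptyset\iff\T(U)\cap\T(V)=\emptyset$. A little more work upgrades this to an adherence relation $\cocX$ on $\B_X$ — encoding when closures meet and, crucially, when one basis element sits \emph{well inside} another in the sense $\cl{U}\subseteq V$ — phrased purely through inclusion and disjointness, so that $\T$ intertwines $\cocX$ with $\cocY$.

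Next I associate to each $x\in X$ its trace filter $\mathcal N_x=\{U\in\B_X:x\in U\}$, which is downward directed, and I examine the decreasing family of closed sets $\{\cl{\T(U)}:U\in\mathcal N_x\}$ in $Y$. Directedness of $\mathcal N_x$ together with preservation of order and nonemptiness gives this family the finite intersection property, and I define $\tau(x)$ to be its common point whenever $\bigcap_{U\in\mathcal N_x}\cl{\T(U)}$ is a single point, letting $X_0$ be the set of such $x$; $Y_0$ and $\tau^{-1}$ arise symmetrically from $\T^{-1}$. The required equivalence $\tau(x)\in\T(U)\iff x\in U\cap X_0$ then follows from the transported adherence relation: if $x\in U$ one interposes $U'\in\mathcal N_x$ with $\cl{U'}\subseteq U$, whence $\cl{\T(U')}\subseteq\T(U)$ and $\tau(x)\in\T(U)$; if $x\notin\cl{U}$ one separates by a disjoint $V\ni x$ and uses $\cl{\T(V)}\cap\T(U)=\emptyset$. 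From this incidence both $\tau$ and $\tau^{-1}$ are continuous, so $\tau$ is a homeomorphism onto its image, and the symmetric construction makes it a bijection $\tauX0Y$.

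The crux, and where the two standing hypotheses enter, is proving that the intersection is exactly a singleton for a dense set of $x$; I expect this to be the main obstacle. The natural device is a Cantor scheme: inside any prescribed $B\in\B_X$ I build $U_0=B\supseteq\cl{U_1}\supseteq U_1\supseteq\cl{U_2}\supseteq\cdots$ with $\operatorname{diam}U_n\to0$, so that $\bigcap_n\cl{U_n}=\{x\}$ for some $x\in B$ by completeness (or, in the locally compact case, by the finite intersection property over nested compacta). The delicate point is that $\operatorname{diam}\T(U_n)$ need not tend to $0$, so to locate a limit point in $Y$ I must exploit the relation $\cocY$ transported from the $\cocX$-control built into the scheme, applying Cantor's theorem to the nested closures $\cl{\T(U_n)}$; uniqueness of the limit is then forced by separating two putative limit points in $Y$ with disjoint basic sets and pulling the separation back through $\T^{-1}$, which the construction was arranged to forbid. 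Since $B$ was arbitrary this simultaneously deposits a point of $X_0$ in every basic open set, giving density of $X_0$ (and of $Y_0$ by symmetry). The boundary points — those lying on $\partial U$ for basis elements $U$, where the singleton property genuinely fails — are exactly what this scheme avoids, which is why $X_0$ is only a dense subspace rather than all of $X$.
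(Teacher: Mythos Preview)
Your outline has the right shape but contains a real gap at the ``crux'' step. In the complete metric case your Cantor scheme shrinks diameters only on the $X$ side; the nested closures $\cl{\T(U_n)}$ in $Y$ need not have diameters tending to zero, and without that Cantor's intersection theorem simply does not apply---nested nonempty closed sets in a complete metric space can have empty intersection (think of $[n,\infty)$ in $\R$). The transported relation $\cocY$ would give you at best $\cl{\T(U_{n+1})}\subseteq\T(U_n)$, which is not enough. The fix, and this is the heart of the argument in~\cite{CCshirota} to which the paper defers, is to \emph{interleave}: having chosen $U_n$, pass to $\T(U_n)$, pick a basic $V_n\subseteq\T(U_n)$ of small $d_Y$-diameter, pull back to $\T^{-1}(V_n)\subseteq U_n$, then choose $U_{n+1}\subseteq\T^{-1}(V_n)$ of small $d_X$-diameter, and repeat. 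This forces diameters to zero on both sides at once, producing $x\in B$ and $y\in Y$ simultaneously with the required two-way correspondence.

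Two smaller points. First, your claim that the relation $\cl{U}\subseteq V$ can be phrased purely through inclusion and disjointness in an \emph{arbitrary} basis is asserted without argument and is not obvious; fortunately the interleaved construction does not need it. Second, defining $X_0$ and $Y_0$ independently as the loci where the one-sided intersection $\bigcap_{U\ni x}\cl{\T(U)}$ is a singleton does not by itself make $\tau$ a bijection $X_0\to Y_0$: you must still check that $\tau(x)\in Y_0$ and that the map built from $\T^{-1}$ returns $x$. This is exactly why the paper, following~\cite{CCshirota}, takes $X_0$ to be the set of $x$ for which there exists $y$ with $\RR_X(x)=\{y\}$ \emph{and} $\RR_Y(y)=\{x\}$; the interleaved scheme lands directly in this set, so density and bijectivity come together.
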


\begin{proof}
The proof is the same as in \cite[Lemma 2]{CCshirota}. 
\end{proof}

\begin{remark}
In the conditions of Lemma~\ref{densos}, we will denote 
$$\RR_X(x)=\bigcap_{x\in U\in\B_X}\T(U);\quad 
\RR_Y(y)=\bigcap_{y\in V\in\B_Y}\T^{-1}(V).$$ 
What the proof of \cite[Lemma 2]{CCshirota} shows is that the subset 
$X_0$ is dense in $X$, where $X_0$ consists of the points $x\in X$ for which 
there exists $y\in Y$ such that $\RR_X(x)=\{y\}$ and $\RR_Y(y)=\{x\}$. 
Once we have that $X_0$ is dense, it is clear that the map sending each 
$x\in X_0$ to the only point in $\RR_X(x)$ is a homeomorphism. 
\end{remark}

The following Theorem is just a translation of the {\em Théorème fondamental} 
in~\cite{Lavrentieff}. 

\begin{teorema}\label{Lavrentieff}
If there exists a bicontinuous, univocal and reciprocal correspondence 
between two given sets (inside an $m$-dimensional space), it is possible to determine 
another correspondence with the same nature between the points of two $G_\delta$ sets 
containing the given sets, the second correspondence agreeing with the first 
in the points of the two given sets. 
\end{teorema}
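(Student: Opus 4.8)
The plan is to recover Lavrentieff's extension theorem by the classical oscillation argument, using only that the ambient $m$-dimensional space, which I take to be $\R^m$, is a complete metric space. Denote the two given sets by $A$ and $B$ and let $f\colon A\to B$ be the homeomorphism, with inverse $g=f^{-1}$. The key tool is a one-sided extension lemma: any continuous $h\colon A\to\R^m$ extends continuously to a $G_\delta$ set lying between $A$ and $\cl A$. First I would attach to each $x\in\cl A$ the oscillation
$$\omega_h(x)=\inf_{\delta>0}\operatorname{diam}\bigl(h(\{z\in A:\,|z-x|<\delta\})\bigr),$$
and check that $\omega_h$ is upper semicontinuous, so that every set $\{x\in\cl A:\omega_h(x)<1/n\}$ is relatively open in $\cl A$. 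Hence $A_h=\{x\in\cl A:\omega_h(x)=0\}$ is $G_\delta$ in $\cl A$, and therefore $G_\delta$ in $\R^m$ since closed sets are $G_\delta$ in a metric space; completeness of $\R^m$ then turns the condition $\omega_h(x)=0$ into a Cauchy condition that defines the continuous extension of $h$ on $A_h$, and plainly $A\subset A_h$.

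Applying this lemma to $f$ and to $g$ produces $G_\delta$ sets $A_1,B_1$ with $A\subset A_1\subset\cl A$ and $B\subset B_1\subset\cl B$, and continuous extensions $f_1\colon A_1\to\cl B$ and $g_1\colon B_1\to\cl A$. The next step is to isolate the parts on which $f_1$ and $g_1$ are mutually inverse. I would put
$$A^*=f_1^{-1}(B_1),\qquad B^*=g_1^{-1}(A_1).$$
Being the preimage of a $G_\delta$ set under a continuous map defined on the $G_\delta$ set $A_1$, the set $A^*$ is again $G_\delta$ in $\R^m$, and likewise for $B^*$; moreover $A\subset A^*$ because $f_1(A)=f(A)\subset B\subset B_1$, and symmetrically $B\subset B^*$.

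The heart of the proof is to verify that $f_1$ restricts to a homeomorphism of $A^*$ onto $B^*$ with inverse $g_1$. Here I would use that $A$ is dense in $A^*$, which holds because $A\subset A^*\subset\cl A$: on $A^*$ the composition $g_1\circ f_1$ is defined and continuous, and it agrees with the identity on the dense subset $A$, so by the Hausdorff property it equals $\Id$ throughout $A^*$; symmetrically $f_1\circ g_1=\Id$ on $B^*$. These two identities then give a short diagram chase: for $x\in A^*$ one has $g_1(f_1(x))=x\in A_1$, so $f_1(x)\in g_1^{-1}(A_1)=B^*$, whence $f_1(A^*)\subset B^*$, and dually $g_1(B^*)\subset A^*$. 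Consequently $f_1\colon A^*\to B^*$ is a continuous bijection whose inverse $g_1$ is continuous, i.e.\ a homeomorphism extending $f$, and $A^*,B^*$ are the required $G_\delta$ sets containing $A$ and $B$. I expect the only fussy points to be the upper semicontinuity of the oscillation and the stability of the $G_\delta$ class under the operations used; the conceptual core is the density-plus-Hausdorff step that forces the extensions to be genuine inverses, and once the preliminary $G_\delta$ bookkeeping is settled this step goes through cleanly.
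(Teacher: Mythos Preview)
Your argument is the standard oscillation proof of Lavrentieff's theorem and is correct as sketched: the upper semicontinuity of $\omega_h$ gives the $G_\delta$ domain of the one-sided extension, the preimages $A^*=f_1^{-1}(B_1)$ and $B^*=g_1^{-1}(A_1)$ are $G_\delta$ because a $G_\delta$ subset of a $G_\delta$ set is again $G_\delta$, and the density-plus-Hausdorff step forces $g_1\circ f_1=\Id_{A^*}$ and $f_1\circ g_1=\Id_{B^*}$, after which the inclusions $f_1(A^*)\subset B^*$ and $g_1(B^*)\subset A^*$ follow and the bijectivity is immediate.

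There is, however, nothing to compare against: the paper does not prove this statement. Theorem~\ref{Lavrentieff} is merely quoted as a translation of the \emph{Th\'eor\`eme fondamental} in Lavrentieff's original paper, and the more general version (Theorem~\ref{Lavrentieff2}) is likewise cited from Willard without proof. The authors use Lavrentieff's theorem as a black box in deriving Proposition~\ref{casiteorema}. So your proposal is not an alternative to the paper's proof but rather a supplied proof where the paper offers none; it is exactly the classical argument one finds in standard references and would serve perfectly well if the paper wished to be self-contained at this point.
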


A more general statement of Lavrentieff's Theorem can be found 
in~\cite[Theorem 24.9]{willard}:

\begin{teorema}[Lavrentieff]\label{Lavrentieff2}
If $X$ and $Y$ are complete metric spaces and $h$ is a homeomorphism of $A\subset X$ 
onto $B\subset Y$, then $h$ can be extended to a homeomorphism $h^*$ of $A^*$ onto 
$B^*$, where $A^*$ and $B^*$ are $G_\delta$-sets in $X$ and $Y$, respectively, and 
$A\subset A^*\subset \cl{A}$, $B \subset B^*\subset \cl{B}$. 
\end{teorema}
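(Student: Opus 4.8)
The plan is to reduce the theorem to the classical extension lemma for continuous maps into complete metric spaces and then to patch the two one-sided extensions together along a matching set.

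First I would prove the following extension lemma: if $Z$ and $W$ are metric spaces with $W$ complete, $S\subset Z$, and $f\colon S\to W$ is continuous, then $f$ extends continuously to a $G_\delta$ set $S^*$ with $S\subset S^*\subset\cl{S}$. The construction runs through the oscillation $\operatorname{osc}(f,z)=\inf_{U\ni z}\operatorname{diam}f(U\cap S)$, the infimum being taken over open neighbourhoods $U$ of $z$. Setting $S^*=\{z\in\cl{S}:\operatorname{osc}(f,z)=0\}$, the completeness of $W$ guarantees that at each such point the values of $f$ cluster to a single limit, so a continuous extension exists there; and $S^*$ is $G_\delta$ because each $\{z:\operatorname{osc}(f,z)<1/n\}$ is open (oscillation is upper semicontinuous) while $\cl{S}$ is closed, hence $G_\delta$, in the metric space $Z$. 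Finally $S\subset S^*$, since $f$ has zero oscillation at every point where it is already continuous.

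Next I would apply this lemma twice: to $h\colon A\to Y$ to obtain a continuous extension $h_1\colon A_1\to Y$ on a $G_\delta$ set $A\subset A_1\subset\cl{A}$, and to $h^{-1}\colon B\to X$ to obtain a continuous extension $k\colon B_1\to X$ on a $G_\delta$ set $B\subset B_1\subset\cl{B}$. These need not be mutually inverse on the whole of $A_1$ and $B_1$, so I would cut down to the matching sets
$$A^*=\{x\in A_1: h_1(x)\in B_1 \text{ and } k(h_1(x))=x\},\qquad B^*=\{y\in B_1: k(y)\in A_1 \text{ and } h_1(k(y))=y\}.$$
A direct check shows that $h_1$ carries $A^*$ bijectively onto $B^*$ with inverse $k|_{B^*}$, both maps continuous, so $h^*:=h_1|_{A^*}$ is the desired homeomorphism. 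Moreover $A\subset A^*$ and $B\subset B^*$ follow from $k\circ h=\mathrm{id}_A$ and $h\circ h^{-1}=\mathrm{id}_B$, while $A^*\subset A_1\subset\cl{A}$ and $B^*\subset B_1\subset\cl{B}$ hold by construction; in particular $h^*$ agrees with $h$ on $A$.

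The one genuinely delicate point is verifying that $A^*$, and symmetrically $B^*$, is again $G_\delta$. I would argue that $h_1^{-1}(B_1)$ is $G_\delta$ in $A_1$, because $B_1$ is $G_\delta$ and $h_1$ is continuous, and hence $G_\delta$ in $X$ since $A_1$ itself is $G_\delta$ in $X$; and that on this set the extra condition $k(h_1(x))=x$ describes a closed subset, namely the set where the continuous maps $x\mapsto k(h_1(x))$ and $x\mapsto x$ coincide, i.e. the preimage of the diagonal of $X\times X$ under a continuous map into the Hausdorff space $X$. Since a closed subset of a $G_\delta$ set is $G_\delta$, it follows that $A^*$ is $G_\delta$ in $X$, and the symmetric computation gives the same for $B^*$. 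The substantive content thus lies entirely in the oscillation lemma; everything after it is routine bookkeeping with $G_\delta$ sets.
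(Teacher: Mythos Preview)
The paper does not give its own proof of this theorem; it merely quotes the statement from Willard's \emph{General Topology} (Theorem~24.9) and uses it as a black box. Your argument is correct and is essentially the standard proof one finds in Willard or Kuratowski: extend $h$ and $h^{-1}$ separately via the oscillation lemma to $G_\delta$ supersets, then restrict to the locus where the two extensions are mutually inverse.

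One small point worth making explicit: when you conclude that ``a closed subset of a $G_\delta$ set is $G_\delta$'', this is not a general topological fact but depends on the ambient space being metric (so that closed sets are themselves $G_\delta$, hence closed $\cap$ $G_\delta$ is $G_\delta$). Since $X$ and $Y$ are metric this is fine, but the sentence as written sounds like a general principle. Otherwise the bookkeeping with $A^*$ and $B^*$ is clean and the verification that $h_1$ restricts to a bijection $A^*\to B^*$ with inverse $k|_{B^*}$ is straightforward.
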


As for the following Theorem, the author has been unable to find Alexandroff's 
work \cite{Alexandroff}, but Hausdorff references the result in~\cite{Hausdorff} as 
follows:

\begin{teorema}[Alexandroff, \cite{Alexandroff, Hausdorff}]\label{Alexandroff}
Every $G_\delta$ subset in a complete space is homeomorphic to a complete space. 
\end{teorema}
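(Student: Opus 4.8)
The plan is to prove that a $G_\delta$ subset $G=\bigcap_{n\in\N}U_n$ of a complete metric space $(X,d)$, with each $U_n$ open, carries a complete metric inducing its subspace topology, by realising $G$ as a \emph{closed} subset of a suitable complete product space. We may discard any $U_n=X$, and if $G=X$ there is nothing to prove, so I assume each $X\setminus U_n\neq\emptyset$.

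First I would treat the case of a single open set $U\subsetneq X$. The function $g(x)=1/d(x,X\setminus U)$ is continuous and positive on $U$ and blows up as $x$ approaches the boundary. Consider the map $\phi:U\to X\times\R$ given by $\phi(x)=(x,g(x))$: it is continuous and injective, and its inverse is the continuous restriction of the first-coordinate projection, so $\phi$ is a homeomorphism onto its image $\Imagen\phi$, the graph of $g$. The crucial point is that this graph is closed in $X\times\R$: if $(x_k,g(x_k))\to(x,t)$, then $d(x_k,X\setminus U)\to 1/t$, so the distances stay bounded away from $0$, forcing $x\in U$ and $t=g(x)$. Since $X\times\R$ is complete and the graph is closed, the graph is complete, and hence $U$ is homeomorphic to a complete space.

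Next I would pass to the general $G_\delta$ case by running the same argument with one extra coordinate per open set. Equip $\R^{\N}$ with a complete metric inducing the product topology (for instance $\rho(a,b)=\sum_{n}2^{-n}\min\{1,|a_n-b_n|\}$), so that $X\times\R^{\N}$ is complete, and define $\psi:G\to X\times\R^{\N}$ by $\psi(x)=\bigl(x,(g_n(x))_{n\in\N}\bigr)$ with $g_n(x)=1/d(x,X\setminus U_n)$. As before, $\psi$ is continuous, injective, and has continuous inverse, so it is a homeomorphism onto its image; and the image is closed, because if $\psi(x_k)\to\bigl(x,(t_n)_{n\in\N}\bigr)$ then for every $n$ we obtain $d(x,X\setminus U_n)=1/t_n>0$, so $x\in U_n$, whence $x\in\bigcap_n U_n=G$ and the limit equals $\psi(x)$. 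Being a closed subspace of a complete metric space, $\Imagen\psi$ is complete, and $G$ is homeomorphic to it.

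I expect the main obstacle to be precisely the closedness of the graph: this is the step where completeness of $X$ is converted into completeness of $G$, and it rests entirely on the blow-up of the functions $g_n$ near the complements $X\setminus U_n$. Everything else — continuity and injectivity of $\psi$, continuity of its inverse, and completeness of the ambient product $X\times\R^{\N}$ — is routine.
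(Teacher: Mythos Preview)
Your argument is correct and is precisely the classical proof of Alexandroff's theorem: embed $G$ as the graph of the ``blow-up'' functions $x\mapsto 1/d(x,X\setminus U_n)$ inside the complete space $X\times\R^{\N}$, and observe that this graph is closed. The only point you leave slightly implicit is why each limit coordinate $t_n$ is strictly positive; this follows because $d(x_k,X\setminus U_n)\to d(x,X\setminus U_n)<\infty$ by continuity of the distance function, so $g_n(x_k)$ cannot tend to $0$.

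As for comparison with the paper: there is nothing to compare. The paper does not prove this theorem at all; it merely quotes it as a classical result of Alexandroff (with the reference to Hausdorff's paper) and uses it as a black box to pass from Lemma~\ref{densos} and Lavrentieff's theorem to Proposition~\ref{casiteorema}. Your write-up therefore supplies what the paper omits.
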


Combining Theorems~\ref{Lavrentieff2} and~\ref{Alexandroff} with Lemma~\ref{densos}
we obtain: 

\begin{prop}\label{casiteorema}
Let $X$ and $Y$ be complete metric spaces and $\TBXY$ an inclusion preserving bijection. 
Then, there exist a complete metric space $Z$ and dense $G_\delta$ subspaces 
$X_1\subset X,\ Y_1\subset Y$ such that $Z, X_1$ and $Y_1$ are mutually homeomorphic. 
\end{prop}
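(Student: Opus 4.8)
The plan is to combine the three ingredients that have just been assembled, feeding them into each other in the right order. Starting from the inclusion-preserving bijection $\TBXY$, the first step is to invoke Lemma~\ref{densos} to obtain dense subspaces $\X0X$ and $\Y0Y$ together with a homeomorphism $\tauX0Y$. At this stage $X_0$ and $Y_0$ are merely dense, with no control over their descriptive complexity, so they are not yet adequate for the conclusion.

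Next I would apply Lavrentieff's Theorem in the form of Theorem~\ref{Lavrentieff2} to the homeomorphism $\tau$. Since $X$ and $Y$ are complete metric spaces and $\tau$ is a homeomorphism of $X_0\subset X$ onto $Y_0\subset Y$, the theorem furnishes an extension $\tau^*$ that is a homeomorphism of $X_1$ onto $Y_1$, where $X_1$ and $Y_1$ are $G_\delta$-sets satisfying $X_0\subset X_1\subset\cl{X_0}$ and $Y_0\subset Y_1\subset\cl{Y_0}$. The containment $X_0\subset X_1$ together with the density of $X_0$ in $X$ forces $X_1$ to be dense in $X$, and likewise $Y_1$ is dense in $Y$; so $X_1$ and $Y_1$ are the desired dense $G_\delta$ subspaces, and the homeomorphism $\tau^*$ shows at once that $X_1$ and $Y_1$ are homeomorphic.

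It remains to produce the complete metric space $Z$. Here I would call on Alexandroff's Theorem~\ref{Alexandroff}: since $X_1$ is a $G_\delta$ subset of the complete metric space $X$, it is homeomorphic to a complete metric space, which we name $Z$. Chaining the homeomorphisms, $Z$ is homeomorphic to $X_1$, and $X_1$ is homeomorphic to $Y_1$ via $\tau^*$, so $Z$, $X_1$ and $Y_1$ are mutually homeomorphic, completing the argument.

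The proof is essentially a bookkeeping exercise in assembling the cited results, so there is no single deep obstacle. The one point deserving genuine care is the density claim: one must verify that passing from $X_0$ to the possibly larger $X_1\subset\cl{X_0}$ preserves density in $X$, and that $X_1$ is genuinely a $G_\delta$ \emph{in $X$} rather than merely in some relative topology, so that Alexandroff's Theorem applies directly. Both follow immediately once the inclusions $X_0\subset X_1\subset\cl{X_0}$ supplied by Theorem~\ref{Lavrentieff2} are read correctly, but they are the only places where the argument could slip.
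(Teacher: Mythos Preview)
Your proposal is correct and follows exactly the route the paper intends: the paper merely states that the proposition is obtained by combining Lemma~\ref{densos} with Theorems~\ref{Lavrentieff2} and~\ref{Alexandroff}, and you have filled in precisely that combination in the natural order. The density and $G_\delta$-in-$X$ checks you flag are indeed the only points requiring verification, and you handle them correctly.
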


% \begin{remark}
% In particular, if $X_1\subset X$ is the subset given by Proposition~\ref{casiteorema}, 
% then $X\setminus X_1$ is a meagre set. 
% This, from a measure theory point of view, means that the regular open lattices 
% and the $\sigma$-algebras are in some kind of correspondence. 
% Las dos se enteran de lo mismo cuando les enredas con densos en ningún sitio, vaya. 
% Lo mismo hay algo interesante ahí. 
% \end{remark}

Of course, if $Z$ is an in Proposition~\ref{casiteorema} then every dense 
$G_\delta$ subset $Z'\subset Z$ fulfils the same, so it is clear that there is 
no {\em minimal} $Z$ whatsoever. In spite of this, it is very easy 
to determine some {\em maximal} $Z$: 

\begin{teorema}\label{teorema}
The greatest possible space $Z$ in the preceding Proposition is (homeomorphic 
to) $(X_0,d_Z)$, where $X_0$ is the subset given in Lemma~\ref{densos} and 
\begin{equation}\label{dZ}
d_Z(x,x')=\max\{d_X(x,x'), d_Y(\tau(x),\tau(x'))\}.
\end{equation}
\end{teorema}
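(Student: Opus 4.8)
The plan is to realise $(X_0,d_Z)$ as the graph of $\tau$ inside a product and to reduce the whole statement to a single closure property. Put on $X\times Y$ the metric $d\big((x,y),(x',y')\big)=\max\{d_X(x,x'),d_Y(y,y')\}$; then $X\times Y$ is complete, and the map $x\mapsto(x,\tau(x))$ is an isometry of $(X_0,d_Z)$ onto the graph $G=\{(x,\tau(x)):x\in X_0\}$, so $(X_0,d_Z)$ is complete if and only if $G$ is closed in $X\times Y$. Since $\tau:X_0\to Y_0$ is a homeomorphism for the subspace topologies, the metric $d_Z$ induces on $X_0$ exactly the subspace topology inherited from $X$ (equivalently, through $\tau$, the one inherited from $Y$). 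Hence the entire Theorem will follow from the property
$(\ast)$: if $(x_n)\subset X_0$, $x_n\to x$ in $X$ and $\tau(x_n)\to y$ in $Y$, then $x\in X_0$ and $\tau(x)=y$. Indeed $(\ast)$ says precisely that $G$ is closed, and, as explained below, it also yields the maximality.

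The heart of the argument is the proof of $(\ast)$. Recall from the remark after Lemma~\ref{densos} that $X_0=\{x:\exists y,\ \RR_X(x)=\{y\},\ \RR_Y(y)=\{x\}\}$ and that $\tau(x)$ is the unique point of $\RR_X(x)$. Let $x_n,x,y$ be as in $(\ast)$. For every basic $U\ni x$ we have $x_n\in U$ for large $n$, so by Lemma~\ref{densos} $\tau(x_n)\in\T(U)$, whence $y\in\cl{\T(U)}$; symmetrically, for every basic $V\ni y$ one gets $x\in\cl{\T^{-1}(V)}$. The crux is to upgrade these memberships in closures to $y\in\RR_X(x)$, $x\in\RR_Y(y)$ and to singletons. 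To that end, given a basic $U\ni x$ choose a basic $U'$ with $x\in U'$ and $\cl{U'}\subseteq U$ (such refinements exist in a metric space); the nesting of $U'$ inside $U$ can be phrased purely through inclusion and disjointness of basic sets, and disjointness is order-theoretic, since $A\cap B=\emptyset$ is equivalent to there being no nonempty basic set contained in both, and is therefore preserved by $\T$. Feeding this into the downward directed family $\{\T(U):x\in U\in\B_X\}$ and using completeness of $Y$ (a decreasing family of nonempty sets of vanishing diameter has a one-point intersection) forces $\RR_X(x)=\{y\}$; the symmetric argument gives $\RR_Y(y)=\{x\}$, so $x\in X_0$ and $\tau(x)=y$. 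I expect the main obstacle to be exactly this passage from closures to the open sets themselves, i.e. the control of the boundary behaviour of the images $\T(U)$: this is the only place where the order-isomorphism origin of $\tau$ is used beyond mere continuity, and here one must lean on the explicit construction of $\tau$ and $X_0$ in the proof of Lemma~\ref{densos}.

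It then remains to assemble the conclusion. By $(\ast)$ the graph $G$ is closed, so $(X_0,d_Z)$ is complete, and by the topological identification above it is homeomorphic to the dense subspaces $X_0\subset X$ and $Y_0\subset Y$. By the classical converse of Theorem~\ref{Alexandroff} (a subspace of a complete metric space is completely metrizable exactly when it is a $G_\delta$), $X_0$ is a dense $G_\delta$ in $X$ and $Y_0$ a dense $G_\delta$ in $Y$; thus $(X_0,d_Z)$ is an admissible $Z$ in Proposition~\ref{casiteorema}. For maximality, let $Z$ be any space as in that Proposition and $X_1\subset X$ the corresponding dense $G_\delta$; the homeomorphism onto $Y_1\subset Y$ produced there extends $\tau$, being obtained from it through Theorem~\ref{Lavrentieff2}. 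For $x\in X_1$, approximating $x$ by a sequence in the dense set $X_0$ and applying $(\ast)$ to its image gives $x\in X_0$; hence $X_1\subseteq X_0$, and $Z\cong X_1$ is homeomorphic to a dense $G_\delta$ subspace of $(X_0,d_Z)$. Therefore $(X_0,d_Z)$ is the greatest admissible $Z$, and the whole difficulty is concentrated in the boundary control inside the proof of $(\ast)$.
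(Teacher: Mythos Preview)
Your graph-in-the-product setup for completeness is exactly the paper's argument in disguise: the paper too takes a $d_Z$-Cauchy sequence, passes to the limits $x\in X$, $y\in Y$, and then claims $\RR_X(x)=\{y\}$, which is precisely your property $(\ast)$. Where you diverge is in the justification of $(\ast)$. Your sketch --- choose $U'$ with $\overline{U'}\subseteq U$, note that disjointness is order-theoretic and hence preserved by $\T$, then invoke a ``decreasing family of vanishing diameter'' in $Y$ --- does not go through as written. Nothing forces the family $\{\T(U):x\in U\in\B_X\}$ to have vanishing diameter: $\T$ is merely an order isomorphism of bases and carries no metric information. Likewise, $\overline{U'}\subset U$ does not translate into $\overline{\T(U')}\subset\T(U)$; your order-theoretic description of disjointness is correct, but it only transports $A\cap B=\emptyset$, not the relation $\overline{A}\subset B$. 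You flag this as the main obstacle, and you are right; but the mechanism you outline does not remove it. (The paper, for its part, simply asserts this step as ``clear'' and ``readily implies'' without supplying the details either.)

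For maximality your route is genuinely different from the paper's and more fragile. The paper restates the Theorem as Theorem~\ref{teoremav2}: the admissible competitors are spaces $Z'$ embedding in $X$ and $Y$ via $\phi'_X,\phi'_Y$ with $\phi'_X(z)\in U\iff\phi'_Y(z)\in\T(U)$. Under that hypothesis the argument is a one-liner: writing $x=\phi'_X(z)$, $y=\phi'_Y(z)$, the biconditional says exactly that $\{\T(U):x\in U\in\B_X\}=\{V\in\B_Y:y\in V\}$, so $\RR_X(x)=\{y\}$, $\RR_Y(y)=\{x\}$, and $x\in X_0$ by the very definition of $X_0$. Your approach instead assumes the competitor is a Lavrentieff extension of $\tau$ (so that $X_0\subseteq X_1$ and the homeomorphism extends $\tau$) and then feeds approximating sequences into $(\ast)$. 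This both narrows the class of competitors --- a generic $Z$ ``as in the Proposition'' need not contain $X_0$ nor extend $\tau$ --- and makes the maximality hinge on the very step you could not complete. The paper's reformulation sidesteps both issues by making compatibility with $\T$ explicit and then reading membership in $X_0$ straight off the definition.
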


A more explicit, though less clear, way to state Theorem~\ref{teorema} is the following: 

\begin{teorema}\label{teoremav2}
The metric $d_Z$ makes $X_0$ complete and, moreover, if $Z'$ embeds into both $X$ and 
$Y$ respectively via $\phi'_X$ and $\phi'_Y$ in such a way that $\phi'_X(z)\in U$ 
if and only if $\phi'_Y(z)\in \T(U)$, then $\phi'_X$ embeds $Z'$ into $X_0$. 
\end{teorema}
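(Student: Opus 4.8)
The plan is to prove the two assertions separately, establishing the universal (maximality) property first: it is the cleaner of the two, and its proof isolates a purely order-theoretic computation that I will then reuse verbatim to finish the completeness statement.

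For the maximality property, fix $z\in Z'$ and put $x=\phi'_X(z)$, $y=\phi'_Y(z)$; by the Remark following Lemma~\ref{densos} it suffices to show $\RR_X(x)=\{y\}$ and $\RR_Y(y)=\{x\}$. The hypothesis says exactly that $x\in U\Leftrightarrow y\in\T(U)$ for every $U\in\B_X$. The inclusion $y\in\RR_X(x)$ is then immediate, since $x\in U$ forces $y\in\T(U)$. For the reverse inclusion, if $y'\in\RR_X(x)$ then $y'\in\T(U)$ whenever $x\in U$, i.e. whenever $y\in\T(U)$; as $\T$ is a bijection of $\B_X$ onto $\B_Y$ this reads $y\in V\Rightarrow y'\in V$ for all $V\in\B_Y$, and since $Y$ is $T_1$ and $\B_Y$ is a basis this forces $y'=y$. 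The same argument applied to $\T^{-1}$ gives $\RR_Y(y)=\{x\}$, so $x\in X_0$; as $\phi'_X$ is already an embedding into $X$ with image inside $X_0$, it embeds $Z'$ into $X_0$.

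For completeness I would first reformulate the metric: $x\mapsto(x,\tau(x))$ is an isometry of $(X_0,d_Z)$ onto the graph $G=\{(x,\tau(x)):x\in X_0\}$, where $X\times Y$ carries the maximum metric. As $X\times Y$ is complete, $(X_0,d_Z)$ is complete precisely when $G$ is closed, so I must show that whenever $a_k\in X_0$ with $a_k\to x$ in $X$ and $\tau(a_k)\to y$ in $Y$, one has $x\in X_0$ and $\tau(x)=y$. The crucial point is the following membership lemma: for every $U\in\B_X$ with $x\in U$ one has $y\in\T(U)$, and symmetrically $x\in\T^{-1}(V)$ for every $V\in\B_Y$ with $y\in V$. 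Granting this, the pair $(x,y)$ satisfies the biconditional $x\in U\Leftrightarrow y\in\T(U)$ (the forward direction is the lemma, the backward one is the symmetric half applied to $V=\T(U)$), and then the order-theoretic computation of the previous paragraph applies word for word to give $\RR_X(x)=\{y\}$ and $\RR_Y(y)=\{x\}$, i.e. $x\in X_0$ and $\tau(x)=y$.

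The main obstacle is exactly this membership lemma. The naive limiting argument is lossy: from $a_k\to x$ with $x\in U$ one gets $a_k\in U$ and hence $\tau(a_k)\in\T(U)$ eventually, but passing to the limit only yields $y\in\cl{\T(U)}$, which is strictly weaker than the required $y\in\T(U)$, since $y$ could a priori sit on $\partial\T(U)$. Upgrading closure-membership to genuine membership is where the hypotheses must be spent. My plan is to use regularity of $X$ to interpose $U'\in\B_X$ with $x\in U'\subset\cl{U'}\subset U$; as $a_k\to x$, eventually $a_k\in U'$, so $y\in\cl{\T(U')}$ and it remains to prove $y\in\T(U)$. Here I would invoke completeness of $Y$ together with the decisive fact that the approximating sequence $a_k$ \emph{itself} converges, to $x\in\cl{U'}\subset U$, and feed this back through the order isomorphism $\T$ on \emph{full} bases—not merely the induced homeomorphism $\tau$ on $X_0$—to exclude the possibility that $y$ lies on $\partial\T(U)$. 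This rigidity of $\T$ is what rules out the ``folding'' of a dense homeomorphism that would otherwise leave $G$ non-closed; alternatively one may reach the same conclusion by extending $\tau$ through Lavrentieff's Theorem~\ref{Lavrentieff2} to a $G_\delta$ domain and then invoking the maximality property already proved to identify that domain with $X_0$.
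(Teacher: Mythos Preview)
Your maximality argument is correct and coincides with the paper's: the paper just says ``by the very definition of $X_0$ and $Y_0$'' the biconditional forces $x\in X_0$, and you have unpacked that cleanly via the computation $\RR_X(x)=\{y\}$, $\RR_Y(y)=\{x\}$.

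For completeness, the graph reformulation is fine and equivalent to the paper's setup, and you correctly isolate the real issue: upgrading $y\in\cl{\T(U)}$ to $y\in\T(U)$. But neither of your proposed fixes closes the gap. Interposing $U'$ with $x\in U'\subset\cl{U'}\subset U$ only yields $y\in\cl{\T(U')}\subset\cl{\T(U)}$ again; the sentence ``feed this back through the order isomorphism $\T$ on full bases \ldots\ to exclude the possibility that $y$ lies on $\partial\T(U)$'' is not an argument, and nothing in the order relation $\T(U')\subset\T(U)$ prevents the common boundary point. The Lavrentieff alternative is circular: to apply the maximality clause to $Z'=X_0^{*}$ you must first check the biconditional $z\in U\Leftrightarrow\tau^{*}(z)\in\T(U)$ for every $z\in X_0^{*}$, and for $z\in X_0^{*}\setminus X_0$ that verification \emph{is} the membership lemma (one only gets $\tau^{*}(z)\in\cl{\T(U)}$ from continuity of $\tau^{*}$).

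It is worth noting that the paper's own proof is equally terse at precisely this point. It asserts as ``clear'' that every sequence $(\tilde x_n)\subset X_0$ converging to $x$ has $\tau(\tilde x_n)\to y$, i.e.\ that $\tau$ extends continuously at $x$; this is not automatic for a homeomorphism between merely dense subsets, and even granting it one still has to pass from $y\in\cl{\T(U)}$ to $y\in\T(U)$ to obtain $y\in\RR_X(x)$. So the gap you identified is genuine, and the paper does not visibly supply the missing step either.
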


\begin{proof}
For the first part, take a $d_Z$-Cauchy sequence $(x_n)$ in $X_0$ and let 
$y_n=\tau(x_n)$ for every $n$. It is clear that both $(x_n)$ and $(y_n)$ are 
$d_X$-Cauchy and $d_Y$-Cauchy, respectively, so let $x=\lim(x_n)\in X, 
y=\lim(y_n)\in Y$, these limits exist because $X$ and $Y$ are complete. 
It is clear that any sequence $(\tilde{x}_n)\subset X_0$ converges to $x$ 
if and only if $y=\lim(\tau(\tilde{x}_n))$. This readily implies that 
$\RR_X(x)=\{y\}$, so $x\in X_0$ and this means $(X_0,d_Z)$ is complete. 

Now we must see that every metric space 
$Z'$ that embeds in both $X$ and $Y$ is embeddable in $X_0$, whenever 
the embeddings respect the isomorphism between the bases. For this, as 
$X_0$ is endowed with the restriction of the topology of $X$ and $Z'$ is homeomorphic 
to $\phi'_X(Z')\subset X$, the only we need is $\phi'_X(Z')\subset X_0$. 
So, suppose $x\in \phi'_X(Z')\setminus X_0$ and let $z\in(\phi'_X)^{-1}(x)$. 
As $Z'$ also embeds in $Y$, there exists $y=\phi'_Y(z)\in \phi'_Y(Z')\setminus Y_0$, 
too, with the property that $x\in U$ if and only if $y\in\T(U)$. 
By the very definition of $X_0$ and $Y_0$ this means that 
$x\in X_0, y\in Y_0$ and $\tau(x)=y$. 
\end{proof}

Now, we approach Proposition~\ref{UX0}, the main result about regular topological 
spaces. For this, the following elementary results will come in handy. 

\begin{lemma}\label{caracterizacion}
Let $Z$ be a topological space and $A\subset Z$. $A$ is a regular open subset of $Z$ 
if and only if for every open $V\subset Z,\ V\subset \cl{A}$ implies $V\subset A$.
\end{lemma}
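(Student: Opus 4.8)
The plan is to reduce both implications to a single elementary observation: $\interior\cl{A}$ is, by definition, the union of all open subsets of $\cl{A}$, hence the largest open set contained in $\cl{A}$. With this in hand, the condition on the right is exactly the inclusion $\interior\cl{A}\subset A$, and its equivalence with regularity follows immediately once we account for the openness of $A$.

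First I would prove the forward implication. Assume $A$ is regular open, so that $A=\interior\cl{A}$. Given any open $V$ with $V\subset\cl{A}$, the set $V$ is one of the open subsets of $\cl{A}$, and therefore $V\subset\interior\cl{A}=A$; this is precisely the asserted condition. No hypothesis beyond $A=\interior\cl{A}$ is needed for this direction.

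For the converse I would assume the condition and test it on the distinguished open set $V=\interior\cl{A}$. Since $\interior\cl{A}$ is open and contained in $\cl{A}$, the hypothesis yields $\interior\cl{A}\subset A$. To obtain the reverse inclusion I use that $A$ is open: from $A\subset\cl{A}$ together with openness we get $A=\interior A\subset\interior\cl{A}$. Combining the two inclusions gives $A=\interior\cl{A}$, that is, $A$ is regular open.

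The step to watch is the reverse inclusion $A\subset\interior\cl{A}$ in the converse: it is the only place where openness of $A$ enters, and it is genuinely indispensable, since a closed interval in $\R$ satisfies the right-hand condition without being regular open. The real content of the lemma is thus the translation of the right-hand condition into $\interior\cl{A}\subset A$ via the maximality of the interior; everything else reduces to the standard fact that an open set always sits inside the interior of its closure. I would therefore record the maximality remark once at the outset and let both directions fall out of it.
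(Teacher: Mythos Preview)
Your argument is correct and is precisely the elementary unpacking that the paper omits entirely (its proof reads, in full, ``It is obvious''). Your remark that openness of $A$ is needed for the converse is well taken: as written the lemma tacitly assumes $A$ open---this is how it is applied in Proposition~\ref{UX0}---and your closed-interval counterexample shows the hypothesis cannot be dropped.
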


\begin{proof}
It is obvious. 
\end{proof}

\begin{lemma}\label{DensoAbierto}
Let $X$ be a topological space. Whenever $Y\subset X$ is dense and $U\subset X$ is open, one has 
$\cl{U}^X=\cl{U\cap Y}^X.$ 
\end{lemma}

\begin{proof}
Let $x\in \cl{U}^X$. This is equivalent to the fact that every open neighbourhood $V$ of 
$x$ has nonempty intersection with $U$. So, $V\cap U$ is a nonempty open subset of $X$ and 
the density of $Y$ implies that $V\cap U\cap Y$ is also nonempty, so $x\in \cl{U\cap Y}^X$ 
and we have $\cl{U}^X\subset\cl{U\cap Y}^X$. The other inclusion is trivial. 
\end{proof}

\begin{lemma}\label{UVW}
Let $X$ be a topological space and $U, V\in R(X)$ such that $U\not\subset V$. Then, 
there is $\emptyset\neq W\in R(X)$ such that $W\cap U=\emptyset$ and $W\subset V$. 
\end{lemma}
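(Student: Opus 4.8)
The plan is to produce $W$ by an explicit formula rather than by any extremal or Zorn-type argument: I would take $W = U \cap (X \setminus \cl{V})$, the portion of $U$ lying outside the closure of $V$, and then verify the required properties. (I read the conclusion with $U$ and $V$ in the roles dictated by the hypothesis $U \not\subset V$, namely $W \subset U$ and $W \cap V = \emptyset$; the opposite labelling is obtained verbatim by exchanging $U$ and $V$.) The first point is nonemptiness, and this is exactly where regularity of $V$ is used. The hypothesis only hands me a point of $U$ outside $V$, which could a priori lie on the boundary $\cl{V}\setminus V$; what I need is a point of $U$ outside all of $\cl{V}$. So I would argue contrapositively: if $U \subset \cl{V}$ then, $U$ being open and $V$ regular open, Lemma~\ref{caracterizacion} applied to $V$ forces $U \subset V$, against the hypothesis. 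Hence $U \not\subset \cl{V}$ and $W \neq \emptyset$.

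The substantive step is checking that $W$ actually belongs to $R(X)$ and is not merely open; this is where a naive choice (an arbitrary open neighbourhood of the point just found) would break down. I would record two facts, each an immediate application of Lemma~\ref{caracterizacion}. First, $X \setminus \cl{V}$ is regular open: since $\cl{X \setminus \cl{V}} = X \setminus V$ for regular open $V$, any open set contained in this closure misses $V$ and therefore, being open, misses $\cl{V}$, i.e. lies inside $X \setminus \cl{V}$. Second, the intersection of two regular open sets is regular open: if an open $O$ satisfies $O \subset \cl{A \cap B} \subset \cl{A} \cap \cl{B}$, then $O \subset A$ and $O \subset B$ by the characterization, whence $O \subset A \cap B$. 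Taking $A = U$ and $B = X \setminus \cl{V}$ gives $W \in R(X)$.

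The remaining properties are immediate: $W \subset U$ by construction, and $W \cap V = \emptyset$ because $W \subset X \setminus \cl{V} \subset X \setminus V$. The only genuine obstacle is the regularity bookkeeping of the middle paragraph — everything hinges on Lemma~\ref{caracterizacion}, and one must resist stopping at an open witness, since the statement demands a member of $R(X)$. Once the two auxiliary regularity facts are established the argument is purely formal; notably no completeness, metrizability, or separation assumption on $X$ is invoked, in agreement with the lemma being asserted for an arbitrary topological space.
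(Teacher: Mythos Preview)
Your proposal is correct and follows exactly the paper's argument: both take $W = U \cap (X \setminus \cl{V}) = U \setminus \cl{V}$, use regularity of $V$ (via Lemma~\ref{caracterizacion} or, equivalently, monotonicity of the interior operator) to get nonemptiness, and observe that $W$ is the intersection of two regular open sets. You correctly flagged the swap of $U$ and $V$ in the stated conclusion---the paper's own proof indeed produces $W\subset U$ with $W\cap V=\emptyset$---and you supplied the short verifications (that $X\setminus\cl{V}\in R(X)$ and that $R(X)$ is closed under finite intersection) which the paper simply asserts.
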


\begin{proof}
Actually, $U\setminus\cl{V}$ is regular because $U$ and $X\setminus\cl{V}$ are 
regular and $\left(U\setminus\cl{V}\right)=U\cap \left(X\setminus\cl{V}\right)$. This 
set is nonempty because $U\subset\cl{V}$, along with the monotonicity of the 
interior operator, would imply 
$$U=\interior_X U\subset\interior_X\left( \cl{V}\right)=V. $$
\end{proof}

\begin{remark}\label{UVWH}
If in Lemma~\ref{UVW} $X$ is regular and Hausdorff, then 
$V$ can be taken as any open subset that contains $U$ strictly. 
\end{remark}

\begin{prop}\label{UX0}
Let $X$ be a topological space and $Y\subset X$ a dense subset. Then $\TRXY$, defined as 
$\T(U)=U\cap Y$, is a lattice isomorphism with inverse $\S(V)=\interior\cl{V}$. 
\end{prop}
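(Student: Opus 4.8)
The plan is to verify four things: that $\T$ maps into $R(Y)$, that $\S$ maps into $R(X)$, that $\S$ and $\T$ are mutually inverse, and that both are order preserving. Two of these are quick. For $\S$, note that for any subset $V$ the set $\interior_X \cl{V}^X$ is the interior of a closed set and hence regular open in $X$ (if $W=\interior_X C$ with $C$ closed, then $W\subseteq C$ forces $\cl{W}^X\subseteq C$, so $\interior_X\cl{W}^X\subseteq\interior_X C=W$, while $W\subseteq\interior_X\cl{W}^X$ always holds); thus $\S$ is well defined. Moreover, for $U\in R(X)$ Lemma~\ref{DensoAbierto} gives $\cl{U\cap Y}^X=\cl{U}^X$, whence $\S(\T(U))=\interior_X\cl{U\cap Y}^X=\interior_X\cl{U}^X=U$, so $\S\circ\T=\Id$.

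Next I would check that $\T$ is well defined, i.e.\ that $U\cap Y\in R(Y)$ whenever $U\in R(X)$. The subspace identity $\cl{U\cap Y}^Y=\cl{U\cap Y}^X\cap Y$ together with Lemma~\ref{DensoAbierto} yields $\cl{U\cap Y}^Y=\cl{U}^X\cap Y$, so the goal becomes $\interior_Y(\cl{U}^X\cap Y)=U\cap Y$. The inclusion $\supseteq$ holds because $U=\interior_X\cl{U}^X$ is $X$-open and contained in $\cl{U}^X$, so $U\cap Y$ is a $Y$-open subset of $\cl{U}^X\cap Y$. For $\subseteq$, given $x$ in the left-hand side pick an $X$-open $O$ with $x\in O\cap Y\subseteq\cl{U}^X$; applying Lemma~\ref{DensoAbierto} to $O$ gives $\cl{O}^X=\cl{O\cap Y}^X\subseteq\cl{U}^X$, hence $O\subseteq\interior_X\cl{U}^X=U$ and so $x\in U\cap Y$.

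The step I expect to be the main obstacle is $\T\circ\S=\Id$, that is, $\interior_X(\cl{V}^X)\cap Y=V$ for $V\in R(Y)$, since it requires juggling interiors and closures across the two spaces $X$ and $Y$. My strategy is to reduce everything to a comparison of closures. Writing $W=\S(V)=\interior_X\cl{V}^X$ and using $V=G\cap Y$ for some $X$-open $G$, Lemma~\ref{DensoAbierto} gives $\cl{V}^X=\cl{G}^X$, so $G\subseteq\interior_X\cl{G}^X=W$ and therefore $V=G\cap Y\subseteq W\cap Y$. Both $V$ and $W\cap Y$ lie in $R(Y)$ (the latter by the previous paragraph), so by Lemma~\ref{caracterizacion} it suffices to show they have the same closure in $Y$. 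From $V\subseteq W\subseteq\cl{V}^X$ I obtain $\cl{W}^X=\cl{V}^X$, and then $\cl{W\cap Y}^Y=\cl{W}^X\cap Y=\cl{V}^X\cap Y=\cl{V}^Y$; regularity of both sets in $Y$ then forces $V=\interior_Y\cl{V}^Y=\interior_Y\cl{W\cap Y}^Y=W\cap Y$.

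Finally, for the order isomorphism, $U\subseteq V$ trivially gives $U\cap Y\subseteq V\cap Y$, and conversely $U\cap Y\subseteq V\cap Y$ yields $\cl{U}^X=\cl{U\cap Y}^X\subseteq\cl{V\cap Y}^X=\cl{V}^X$ by Lemma~\ref{DensoAbierto}, so $U=\interior_X\cl{U}^X\subseteq\interior_X\cl{V}^X=V$. Hence $\T$ is an order preserving bijection with order preserving inverse $\S$, which is exactly the asserted lattice isomorphism.
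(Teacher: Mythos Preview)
Your proof is correct. The overall plan matches the paper's---show both maps are well defined and then that they are mutually inverse---but two of the four steps are carried out differently, and in both cases your route is the more direct one.

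For the well-definedness of $\T$, the paper appeals to Lemma~\ref{caracterizacion}: given an $X$-open $V$ with $V\cap Y\subset\cl{U\cap Y}^Y$, it uses Lemma~\ref{DensoAbierto} to push this to $\cl{V}^X\subset\cl{U}^X$ and then takes interiors to conclude $V\cap Y\subset U\cap Y$. You instead compute $\interior_Y\cl{U\cap Y}^Y$ directly by first rewriting the $Y$-closure as $\cl{U}^X\cap Y$ via the subspace identity and Lemma~\ref{DensoAbierto}, and then arguing both inclusions of $\interior_Y(\cl{U}^X\cap Y)=U\cap Y$ by hand. The two arguments are close in spirit; yours avoids invoking the auxiliary characterisation lemma.

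The more substantive difference is in $\T\circ\S=\Id$. The paper writes $V=W\cap Y$ for some $X$-open $W$, reduces to showing $\interior_X(\cl{W}^X)\cap Y\subset W\cap Y$, and then argues by contradiction using Lemma~\ref{UVW}: a failure of the inclusion would produce a nonempty $Y$-open set witnessing the gap, and tracing this back to $X$ yields an impossible open $G$ that is both disjoint from $W$ and contained in $\interior_X\cl{W}^X$. Your argument is cleaner: you observe that $V\subset W:=\S(V)\subset\cl{V}^X$ forces $\cl{W}^X=\cl{V}^X$, hence $\cl{W\cap Y}^Y=\cl{V}^Y$, and since both $V$ and $W\cap Y$ are regular open in $Y$ (the latter by the step already done), equality of closures gives equality of the sets. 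This bypasses Lemma~\ref{UVW} and the contradiction entirely, so your proof ends up using only Lemma~\ref{DensoAbierto} among the auxiliary results.
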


\begin{proof}
We need to show that $\T$ and $\S$ are mutually inverse. 

Let $U\in R(X)$, the first we need to show is that $\T$ is well-defined, i.e., that 
$\T(U)=U\cap Y$ is regular in $Y$. 

Let $V\subset X$ an open subset such that $V\cap Y\subset 
\cl{U\cap Y}^Y$. Then, as the closure in $X$ preserves inclusions, we have 
$$\cl{V}^X=\cl{V\cap Y}^X\subset \cl{\cl{U\cap Y}^{Y}}^X\subset \cl{U}^X,$$ 
where the first equality holds because of Lemma~\ref{DensoAbierto}. Taking interiors 
in $X$ also preserves inclusions, so we obtain 
$$V\subset\interior_X\left(\cl{V}^X\right)\subset \interior_X\left(\cl{U}^X\right)=U,$$ 
which readily implies that $V\cap Y\subset U\cap Y$ 
and we obtain $V\cap Y\in R(Y)$ from Lemma~\ref{caracterizacion}. 
It is clear that $\S(V)\in R(X)$ for every $V\in R(Y)$, so both maps are well-defined. 

Furthermore, Lemma~\ref{DensoAbierto} implies that, for any regular $U\subset X,$ 
$$\S\circ\T(U)=\S(U\cap Y)=\interior_X\left(\cl{U\cap Y}^X\right)=\interior_X\left(\cl{U}^X\right)=U. $$
As for the composition $\T\circ\S$, we have 
$$\T\circ\S(V)=\T\left(\interior_X\left(\cl{V}^X\right)\right)=\interior_X\left(\cl{V}^X\right)\cap Y$$
for any $V\in R(Y)$. Let $W\subset X$ be an open subset for which $V=W\cap Y$, 
the very definition of inherited topology implies that there exists such $W$. 
The previous equalities can be rewitten as 
$$\T\circ\S(W\cap Y)=
\T\left(\interior_X\left(\cl{W\cap Y}^X\right)\right)=
\T\left(\interior_X\left(\cl{W}^X\right)\right)=
\interior_X\left(\cl{W}^X\right)\cap Y,$$
so we need $W\cap Y=\interior_X\left(\cl{W}^X\right)\cap Y$. 
It is clear that $W\cap Y\subseteq\interior_X\left(\cl{W}^X\right)\cap Y$, 
so what we need is $\interior_X\left(\cl{W}^X\right)\cap Y\subseteq W\cap Y$. 
Both subsets are regular in $Y$, so if this inclusion does not hold, 
there would exist an open $H\subset Y$ 
$$H\neq\emptyset,\quad H\subset \interior_X\left(\cl{W}^X\right)\cap Y,
\quad H\cap(W\cap Y)=\emptyset,$$
so, by Lemma~\ref{UVW} there is an open $G\subset X$ such that $H=G\cap Y$ and so 
\begin{equation}\label{GYW}
G\cap Y\neq\emptyset,\quad G\cap Y\stackrel{(*)}\subset \interior_X\left(\cl{W}^X\right)\cap Y,
\quad (G\cap Y)\cap(W\cap Y)=\emptyset,
\end{equation}
and this is absurd. Indeed, the inclusion marked with $(*)$ implies that 
we may substitute $G$ by $G\cap\interior_X\left(\cl{W}^X\right)$, so both 
equalities in (\ref{GYW}) hold for some open $G\subset \interior_X\left(\cl{W}^X\right)$. 
As $Y$ is dense and $G$ and $W$ are open, the last 
equality implies that $G\cap W=\emptyset$. Of course, this implies 
$G\cap \interior_X\left(\cl{W}^X\right)=\emptyset$, which means $G=\emptyset$ 
and we are done. 
\end{proof}

Now we are in conditions to state our main result: 
\begin{theorem}\label{regulares} 
Let $X$, $Y$ and $Z$ be complete metric spaces, $\phi_X:Z\hookrightarrow X$ 
and $\phi_Y:Z\hookrightarrow Y$ be continuous, dense, embeddings and 
% $X_0=\Imagen(\phi_X), Y_0=\Imagen(\phi_Y)$. 
$X_0=\phi_X(Z).$  
% $Y_0=\phi_Y(Z)$
Then, $\TRXY$ given by the composition 
$$U\mapsto U\cap X_0\mapsto \phi_X^{-1}(U\cap X_0)\mapsto \phi_Y(\phi_X^{-1}(U\cap X_0))
\mapsto \interior\left(\cl{\phi_Y(\phi_X^{-1}(U\cap X_0))}\right)$$
is an isomorphism between the lattices of open regular subsets of $X$ and $Y$ 
and every isomorphism arises this way. 
\end{theorem}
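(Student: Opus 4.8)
The plan is to verify two independent claims: first, that the displayed composition $\T$ is a well-defined lattice isomorphism, and second, that every lattice isomorphism $R(X)\to R(Y)$ arises in this fashion. The first claim factors neatly through the intermediate results already at hand. The composition is built from four arrows: intersecting with the dense subset $X_0$, transporting via the homeomorphism $\phi_X^{-1}$, pushing forward via $\phi_Y$, and finally applying $\interior\cl{\cdot}$. I would observe that $U\mapsto U\cap X_0$ is exactly the map $R(X)\to R(X_0)$ of Proposition~\ref{UX0} (valid since $X_0=\phi_X(Z)$ is dense in $X$), that the two middle arrows together constitute the lattice isomorphism $R(X_0)\to R(Y_0)$ induced by the homeomorphism $\phi_Y\circ\phi_X^{-1}:X_0\to Y_0$ (where $Y_0=\phi_Y(Z)$), and that the last arrow $V\mapsto\interior\cl{V}^Y$ is the inverse $\S:R(Y_0)\to R(Y)$ supplied by Proposition~\ref{UX0} applied to the dense $Y_0\subset Y$. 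Since each arrow is an order isomorphism, so is their composition, and the inverse is the reverse composition.

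For the converse, let $\T:R(X)\to R(Y)$ be an arbitrary lattice isomorphism. Since $R(X)$ and $R(Y)$ are bases of the topologies of $X$ and $Y$, Lemma~\ref{densos} applies and produces dense subspaces $X_0\subset X$, $Y_0\subset Y$ and a homeomorphism $\tau:X_0\to Y_0$ with $\tau(x)\in\T(U)\iff x\in U\cap X_0$. I would then invoke Theorem~\ref{teoremav2}: the metric $d_Z$ of~\eqref{dZ} makes $X_0$ complete, so setting $Z=(X_0,d_Z)$, $\phi_X=\Id_{X_0}\hookrightarrow X$ and $\phi_Y=\tau:X_0\to Y_0\hookrightarrow Y$ exhibits $Z$ as a complete metric space embedding densely into both $X$ and $Y$, with the compatibility $\phi_X(z)\in U\iff\phi_Y(z)\in\T(U)$ reading off directly from the defining property of $\tau$. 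It remains to check that the $\T$ reconstructed from this data by the displayed formula agrees with the original $\T$; this is where the characterisation of regular open sets does the work. For $U\in R(X)$, the reconstructed value is $\interior\cl{\tau(U\cap X_0)}^Y$, and I would show this equals $\T(U)$ by proving they have the same intersection with the dense $Y_0$, namely $\tau(U\cap X_0)=\T(U)\cap Y_0$, which is precisely the content of the equivalence $\tau(x)\in\T(U)\iff x\in U\cap X_0$; two regular open subsets of $Y$ with the same trace on the dense $Y_0$ coincide by Proposition~\ref{UX0}.

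The main obstacle is the bookkeeping in this last identification: one must be careful that the homeomorphism $\tau$ delivered by Lemma~\ref{densos} is genuinely the same map as $\phi_Y\circ\phi_X^{-1}$ under the identification $Z=X_0$, and that $Y_0$ from Lemma~\ref{densos} matches $\phi_Y(Z)$. The equality $\tau(U\cap X_0)=\T(U)\cap Y_0$ requires both inclusions: the forward inclusion is immediate from $x\in U\cap X_0\Rightarrow\tau(x)\in\T(U)$, while the reverse needs that every point of $\T(U)\cap Y_0$ is $\tau(x)$ for some $x\in U\cap X_0$, which follows since $\tau$ is a bijection onto $Y_0$ and the equivalence runs in both directions. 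Once these identifications are pinned down, the collapse of the reconstructed $\T$ onto the original is purely formal via the injectivity of $V\mapsto V\cap Y_0$ on $R(Y)$ guaranteed by Proposition~\ref{UX0}.
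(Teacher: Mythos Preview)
Your proposal is correct and follows essentially the same approach as the paper, which simply cites Proposition~\ref{UX0} for the forward direction and Theorem~\ref{teoremav2} for the converse. You have in fact filled in a step the paper leaves implicit: verifying that the isomorphism reconstructed from $Z=(X_0,d_Z)$, $\phi_X$, $\phi_Y$ actually coincides with the given $\T$, which you handle cleanly via the injectivity of $V\mapsto V\cap Y_0$ on $R(Y)$ from Proposition~\ref{UX0}.
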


The ``every isomorphism arises this way" part is due to Theorem~\ref{teoremav2}, 
while the ``the composition is an isomorphism" part is consequence of 
Proposition~\ref{UX0}. 

\section{Applications and remarks}\label{skip}
In this Section, we are going to show how Proposition~\ref{UX0} gives in an 
easy way some properties of $\beta\N$ and conclude with a couple of examples 
that show that the hypotheses imposed in the main results are necessary. 
But first, we need to deal with an error in some outstanding work. 
In~\cite{CClipschitz} F.~Cabello and the author of the present paper 
showed that some results in~\cite{shirota} were not properly proved. 
Later, in~\cite{CCshirota} the same authors proved that, even when the 
proof of \cite[Theorem 6]{shirota} was incorrect, the result was true. 
Now, we are going to explain what the error was. 
The following Definitions and Theorems can be found in~\cite{shirota}: 

\begin{definition}[Definition 2]\label{RL}
A distributive lattice with smallest element 0 satisfying Wallman's disjunction 
property is an {\em R-lattice} if there exists a binary relation $\gg$ in $L$ 
which satisfies: 
\begin{itemize} 
\item If $h\geq f$ and $f\gg g$, then $h\gg g$. 
\item If $f_1\gg g_1$ and $f_2\gg g_2$, then $f_1\wedge f_2\gg g_1\wedge g_2$. 
\item If $f\gg g$, then there exists $h$ such that $f\gg h\gg g$. 
\item For every $f\neq 0$ there exist $g_1$ and $g_2\neq 0$ such that 
$g_1\gg f\gg g_2$. 
\item If $g_1\gg f\gg g_2$, then there exists $h$ such that $h\vee f=g_1$ and 
$h\wedge g_2=0$. 
\end{itemize} 
\end{definition}

Immediately after Definition~2 we find this:

\begin{theorem}[Theorem 1]
A distributive lattice with smallest element 0 is an R-lattice if and only if 
it is isomorphic to a sublattice of the lattice of all regular open sets on a 
locally compact space $X$. This sublattice is an open base and its elements 
have compact closures. 
\end{theorem}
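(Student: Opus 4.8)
The statement is an equivalence, and its two halves are of very unequal depth. I would dispose of the easy direction first. Suppose $L$ is (isomorphic to) a sublattice of $R(X)$, with $X$ locally compact Hausdorff, whose members form an open base and have compact closures. Define, for members $U,V$ of this sublattice, $U\gg V$ to mean that $\cl V$ is compact and $\cl V\subset U$. Distributivity of the lattice is standard, Wallman's disjunction property is exactly Lemma~\ref{UVW} (sharpened by Remark~\ref{UVWH}), and the five conditions of Definition~\ref{RL} are then geometric verifications: the monotonicity and meet-compatibility conditions are immediate from $\cl{V_1\cap V_2}\subset\cl{V_1}\cap\cl{V_2}$ and the fact that intersections of regular open sets are regular open; the interpolation and boundedness conditions follow from local compactness (sandwiching a compact set between nested open sets of compact closure) together with the compact-closure hypothesis; and the separation condition is a Urysohn/normality-type argument, taking $h$ to be the regularisation $\interior\cl{(g_1\setminus\cl{g_2})}$. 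The only subtle point here is that each $h$ so produced must land back in $L$, which is exactly why the sublattice is required to be an open base.

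The substantive content is the converse: from an abstract R-lattice $(L,\gg)$ I must manufacture the space $X$. The plan is a Wallman-type construction internal to $L$. Call a proper filter $F\subset L$ \emph{round} when for every $f\in F$ there is $g\in F$ with $f\gg g$ (this mimics a point having a neighbourhood base of compactly-contained sets), and let $X$ be the set of maximal round filters, topologised by declaring each $\widehat a=\{F\in X:a\in F\}$ open. Since $F$ is a filter, $\widehat a\cap\widehat b=\widehat{a\wedge b}$, so the sets $\widehat a$ are closed under finite intersection and hence form a base; I claim $a\mapsto\widehat a$ is the required lattice isomorphism onto an open base of $R(X)$ by elements of compact closure.

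The verifications I would then carry out are: (i) \emph{order-embedding}: $a\le b$ gives $\widehat a\subset\widehat b$ trivially, and conversely if $a\not\le b$ the disjunction property yields $c\neq0$ with $c\le a$ and $c\wedge b=0$; the boundedness condition lets me start a round filter below $c$, which Zorn extends to a maximal round filter $F$ containing $a$ but excluding $b$ (as $c\wedge b=0$ forbids $b\in F$), so $\widehat a\not\subset\widehat b$. (ii) \emph{lattice embedding}: preservation of meets is automatic from the filter property, while preservation of the regular-open join $\interior\cl{\widehat a\cup\widehat b}$ reduces, via the base property and Lemma~\ref{caracterizacion}, to the order-embedding just proved (any basic $\widehat c$ inside $\cl{\widehat a\cup\widehat b}$ forces $c\le a\vee b$). (iii) \emph{regular openness}: again by Lemma~\ref{caracterizacion}, using roundness and the interpolation condition to show any basic open contained in $\cl{\widehat a}$ is already contained in $\widehat a$, with Lemma~\ref{DensoAbierto} handling closures. (iv) \emph{Hausdorffness}, from the disjunction property and the separation condition, which keep distinct maximal round filters from sharing all basic neighbourhoods.

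The main obstacle is the remaining half of (iv), \emph{local compactness}, and specifically showing that $\cl{\widehat a}$ is compact and contained in $\widehat b$ whenever $b\gg a$. This is where all five conditions of Definition~\ref{RL} conspire: I would prove compactness by an Alexander-subbase/ultrafilter argument, taking a family of basic closed sets with the finite intersection property and using the interpolation condition to thicken it into a genuine round filter, so that a maximal round filter refining it supplies the required cluster point; the disjunction property forces these limit points into the closure, and the boundedness condition guarantees that the $\widehat a$ with compact closure already form a base, so that $X$ is locally compact and every image element has compact closure. Once this compactness is in hand, assembling (i)--(iv) identifies $L$ with a compactly-closured open base of $R(X)$ and completes the representation, and with it the theorem.
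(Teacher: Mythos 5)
The first thing to say is that the paper contains no proof of this statement at all: Theorem~1 is quoted verbatim from Shirota's paper \cite{shirota}, purely as background for the discussion of the flaw in the uniqueness claim of Theorem~2 (namely, that Definition~\ref{RL} only asserts the \emph{existence} of a relation $\gg$, which is repaired in Definition~\ref{RL2}). So there is no internal argument to compare yours with; the benchmark is Shirota's original construction, and your maximal-round-filter plan is indeed the same genre of Wallman-type representation that Shirota (and later de~Vries for compingent algebras) used. In approach you are on the classical track, and your reading of the quantifier on $\gg$ in the easy direction --- that you are free to \emph{choose} $U\gg V$ to mean $\cl{V}$ compact and $\cl{V}\subset U$ --- is exactly the reading the paper insists on.

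As a proof, however, the proposal has genuine gaps at precisely the load-bearing points. In the easy direction, your witness $h=\interior\cl{\left(g_1\setminus\cl{g_2}\right)}$ for the separation axiom generally does \emph{not} lie in the sublattice $L$, and ``because the sublattice is an open base'' is a flag, not an argument. What actually works is a covering argument: the set $K=\cl{g_1}\setminus f$ is compact and misses $\cl{g_2}$, so (using regularity of a locally compact Hausdorff space) it is covered by finitely many basic $b_i\in L$ with $b_i\cap\cl{g_2}=\emptyset$, and one takes $h=\left(\bigvee b_i\right)\wedge g_1$ \emph{inside} $L$; checking $h\vee f=g_1$ and $h\wedge g_2=0$ then requires remembering that the join in $R(X)$ is $\interior\cl{\left(\cdot\cup\cdot\right)}$, not the union, so that ``an open set meeting $\interior\cl{A}$ meets $A$'' must be invoked. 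The same manufacture-inside-$L$ issue silently affects your interpolation and boundedness verifications, which also need finite covers of compact sets by basic elements.

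In the hard direction there are two concrete holes. First, Definition~\ref{RL} does not postulate that $f\gg g$ implies $f\geq g$, so when you ``start a round filter below $c$'' by iterating the boundedness axiom to get $c\gg c_1\gg c_2\gg\cdots$, it is not automatic that the generated filter is proper (all finite meets nonzero) or round; you must use the meet-compatibility axiom to replace the chain by a $\leq$-decreasing one and verify nondegeneracy, and this is a place where an unchecked sketch can actually fail. Second, and more seriously, the compactness of $\cl{\widehat a}$ --- which you yourself identify as the main obstacle --- is left as a plan: the ``thickening'' of a family with the finite intersection property into a round filter is exactly where the interpolation and separation axioms must be deployed, and without it there is no local compactness and hence no theorem. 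Until these steps are carried out, the proposal is a plausible outline of the standard argument rather than a proof; since the paper itself proves nothing here, none of this can be certified by comparison with the text.
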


The open regular set in $X$ associated to $f\in L$ is denoted by $U(f)$. With 
this notation, the next statement is: 

\begin{theorem}[Theorem 2]\label{sh2}
Let $L$ be an R-lattice. Then there exists uniquely a locally compact space $X$ 
which satisfies the property in Theorem 1 and where $U(f)\supset \cl{U(g)}$ 
if and only if $f\gg g$. 
\end{theorem}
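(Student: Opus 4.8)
The plan is to reconstruct $X$ directly from $L$ and the relation $\gg$, then verify the properties of Theorem~1 together with the characterisation of $\gg$, deferring uniqueness to a recovery argument. I would take the points of $X$ to be the \emph{round prime filters} of $L$: the proper, upward-closed, $\wedge$-closed subsets $\mathcal F\subset L$ that are prime (if $f\vee g\in\mathcal F$ then $f\in\mathcal F$ or $g\in\mathcal F$) and round, meaning that every $f\in\mathcal F$ admits some $g\in\mathcal F$ with $f\gg g$. Declaring $U(f)=\{\mathcal F\in X:f\in\mathcal F\}$ a base, primeness gives $U(f\vee g)=U(f)\cup U(g)$ and the filter condition gives $U(f\wedge g)=U(f)\cap U(g)$, so $f\mapsto U(f)$ is a lattice homomorphism; injectivity, and the fact that each $U(f)$ is \emph{regular} open, would follow from Wallman's disjunction property combined with Lemma~\ref{caracterizacion}. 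The first two items of Definition~\ref{RL} are exactly what guarantee that $\{U(f)\}$ behaves well under the order and meet.

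Next I would establish the topological side: that $X$ is locally compact, that $\{U(f)\}$ is an open base, and that every $\cl{U(g)}$ is compact. Roundness is precisely what forces each point $\mathcal F\in U(f)$ to lie in some smaller $U(g)\subset\cl{U(g)}\subset U(f)$ with $f\gg g$, which is the local compactness mechanism; the fourth item of Definition~\ref{RL} (every nonzero $f$ has some $g_1\gg f\gg g_2$ with $g_2\neq0$) guarantees that such smaller elements always exist and that $X$ is nonempty and covered by the $U(g)$. Separation — regularity and the $T_1$/Hausdorff property needed to distinguish round prime filters — I would extract from the disjointness clause, the fifth item of Definition~\ref{RL}, which produces from $g_1\gg f\gg g_2$ a complementary $h$ with $h\vee f=g_1$ and $h\wedge g_2=0$.

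The heart of the matter is the equivalence $f\gg g\iff\cl{U(g)}\subset U(f)$. For the forward implication I would interpolate via the third item to write $f\gg h\gg g$, apply the disjointness clause to $f\gg h$ to separate $\cl{U(h)}$ from the complement of $U(f)$, and conclude $\cl{U(g)}\subset\cl{U(h)}\subset U(f)$, with $\cl{U(g)}$ compact by the previous paragraph. For the converse I would use that the compact set $\cl{U(g)}$ is covered, by roundness, by basic sets $U(g_i)$ each satisfying $f\gg g_i$, extract a finite subcover, and combine the $g_i$ through the second item and upward closure to upgrade to $f\gg g$. I expect this compactness-and-covering step to be the main obstacle: it is where the interplay of the five axioms is most delicate, and it is exactly the kind of argument reported to be faulty in~\cite{shirota}, so it is the step I would scrutinise most carefully.

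Finally, for uniqueness, suppose $X'$ is any locally compact space realising Theorem~1 and the $\gg$-property. I would send each $p\in X'$ to the filter $\mathcal F_p=\{f\in L:p\in U(f)\}$ and check that $\mathcal F_p$ is a round prime filter, that $p\mapsto\mathcal F_p$ is a bijection onto $X$, and that it carries the base of $X'$ to the base of $X$; since both the topology and the membership of points in basic sets are determined entirely by $L$ and by $\gg$ through the characterisation just proved, this identification is forced to be a homeomorphism. The one subtle point here is surjectivity onto \emph{all} round prime filters, i.e.\ that $X'$ has enough points, which again reduces to the compactness argument above.
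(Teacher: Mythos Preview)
The paper does not prove Theorem~\ref{sh2}; it is quoted verbatim from Shirota's 1952 paper and then \emph{criticised}. The paper's actual contribution concerning this statement is the opposite of a proof: it exhibits, via Proposition~\ref{UX0} and the explicit example $X=\R\cup\{-\infty,\infty\}$, $Y=\R\cup\{N\}$, that the uniqueness claim fails when ``R-lattice'' is read as in Definition~\ref{RL}, where $\gg$ is merely asserted to exist rather than fixed as part of the data. The paper then repairs the definition (Definition~\ref{RL2}, making $\gg$ part of the structure) and asserts without proof that the corrected statement, Theorem~\ref{sh2bien}, is valid.

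Your sketch silently adopts the corrected reading: you build $X$ from the pair $(L,\gg)$ via round prime filters. That is a perfectly standard and reasonable route to Theorem~\ref{sh2bien}, and nothing in it is obviously wrong. But it does not engage with what the paper is actually doing with Theorem~\ref{sh2}. In particular, your uniqueness argument sends $p\in X'$ to $\mathcal F_p=\{f:p\in U(f)\}$ and checks roundness \emph{using the given $\gg$}; if instead one only knows that $(L,\leq)$ admits \emph{some} $\gg$, two realising spaces $X'$ and $X''$ may induce different relations $\gg'$ and $\gg''$, and your map no longer identifies them. The paper's example shows precisely this: $R(\R\cup\{-\infty,\infty\})$ and $R(\R\cup\{N\})$ are isomorphic as lattices but carry distinct $\gg$'s, so there is no unique $X$ attached to $L$ alone. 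So your proposal proves the right repaired theorem but misses the paper's point about the original one.
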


Our Proposition~\ref{UX0} contradicts the 
uniqueness of $X$ in the statement of Theorem~2 and we may actually explicit a 
lattice isomorphism between $R(X)$ and $R(Y)$ for different compact metric 
spaces $X$ and $Y$. Namely, we just need to take the simplest compactifications 
of $\R$ and the composition of the lattice isomorphisms predicted by 
Proposition~\ref{UX0}: 

\begin{example}
Let $X=\R\cup\{-\infty,\infty\}$ and $Y=\R\cup\{N\}$. Then, 
$\TRXY$, defined by 
$$\T(U)=\left\{
\begin{array}{l l}
U & \mathrm{\ if \ } U\cap\{-\infty,\infty\}=\emptyset \\
U\cap\R & \mathrm{\ if \ } U\cap\{-\infty,\infty\}=\{\infty\}\\
U\cap\R & \mathrm{\ if \ } U\cap\{-\infty,\infty\}=\{-\infty\}\\
(U\cap\R)\cup\{N\} & \mathrm{\ if \ } \{-\infty,\infty\}\subset U
\end{array}
\right.$$
is a lattice isomorphism whose inverse is given by 
$$\S(V)=\left\{
\begin{array}{l l}
V & \mathrm{\ if \ } N\not\in V\\
(V\cap\R)\cup\{-\infty,\infty\} & \mathrm{\ if \ } N\in V
\end{array}
\right.$$
\end{example}
 
It seems that the problem here is that the definition of R-lattice, Definition~2, 
does not include the relation $\gg$, but in Theorem~2 and its consequences the 
author considers $\gg$ as a unique, fixed, relation given by $(\LL,\leq)$. It is 
clear that the above spaces generate, say, different $\gg_X$ and $\gg_Y$ in the 
isomorphic lattices $R(X)$ and $R(Y)$. This leads to the error already noted 
in~\cite{CClipschitz}, Section~5. 

Actually, with the definition of R-lattice given in~\cite{shirota}, in seems that 
the original purpose of the definition is lost. Indeed, the relation $\gg$ 
may be taken as $\geq$ in quite a few lattices. This leads to a topology where 
every regular open set is clopen, in Section~\ref{betaN} we will see an example 
of a far from trivial topological space where this is true. 
Given a lattice $(\LL,\geq)$, the relation between each possible 
$\gg$ and the unique locally compact topological space given by Theorem~2 probably 
deserves a closer look. 

Anyway, if we include $\gg$ in the definition, then \cite[Theorem~2]{shirota} 
is true. So let us put everything in order. 

\begin{definition}[Shirota]\label{RL2}
Let $(\LL,\leq)$ be a distributive lattice with $\min\LL=0$ and $\gg$ be a 
relation in $\LL$. 
The triple $(\LL,\leq,\gg)$ is an R-lattice if the following holds: 
\begin{enumerate}
\item For every $a\neq b\in\LL$, there exists $h\in\LL$ such that 
either $a\wedge h=0$ and $b\wedge h\neq 0$ or the other way round. 
(Una forma de Wallman's disjunction Property). 
\item If $h\geq f$ and $f\gg g$, then $h\gg g$. 
\item If $f_1\gg g_1$ and $f_2\gg g_2$, then $f_1\wedge f_2\gg g_1\wedge g_2$. 
\item If $f\gg g$, then there exists $h$ such that $f\gg h\gg g$. 
\item For every $f\neq 0$ there exist $g_1$ and $g_2\neq 0$ such that 
$g_1\gg f\gg g_2$. 
\item If $g_1\gg f\gg g_2$, then there exists $h$ such that $h\vee f=g_1$ and 
$h\wedge g_2=0$. 
\end{enumerate}
\end{definition}

With Definition~\ref{RL2} everything works and this result remains valid, 
but it does not lead to the consequences stated there as Theorems 3 to 6. 

\begin{theorem}[Theorem 2]\label{sh2bien}
Let $(\LL,\leq,\gg)$ be an R-lattice. Then there exists uniquely a locally compact space $X$ 
which satisfies the property in Theorem 1 and where $U(f)\supset \cl{U(g)}$ 
if and only if $f\gg g$. 
\end{theorem}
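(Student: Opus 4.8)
The plan is to realise $X$ by a Wallman--Stone type construction whose points are built directly from the order and the relation $\gg$, so that both existence and uniqueness flow from one and the same description of the point set. Concretely, I would call a nonempty $p\subset\LL$ a \emph{round prime filter} when $0\notin p$, when $p$ is upward closed and closed under $\wedge$, when $p$ is prime (that is, $f\vee g\in p$ forces $f\in p$ or $g\in p$), and when $p$ is round in the sense that for every $f\in p$ there is $g\in p$ with $f\gg g$. I let $X$ be the set of all round prime filters, set $U(f)=\{p\in X:f\in p\}$, and topologise $X$ by declaring $\{U(f):f\in\LL\}$ a base.

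For existence I would verify the representation axiom by axiom. Distributivity together with the filter and primeness conditions gives $U(f\wedge g)=U(f)\cap U(g)$ and $U(f\vee g)=U(f)\cup U(g)$, so $f\mapsto U(f)$ is a lattice homomorphism; Wallman's disjunction property (item~1) yields injectivity, once one knows that round prime filters through a prescribed nonzero element exist, which is a Zorn's Lemma extension argument. Item~5, asserting $g_1\gg f\gg g_2$ with $g_2\neq0$, is what forces local compactness and compactness of every $\cl{U(f)}$; the interpolation of item~4 gives that the $U(f)$ are regular open and form a base through roundness; and the normality-type item~6 supplies the separation that makes $X$ Hausdorff. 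The decisive clause $f\gg g\iff U(f)\supset\cl{U(g)}$ I would settle by first computing that $p\in\cl{U(g)}$ means $h\wedge g\neq0$ for every $h\in p$, and then using item~6 applied to an interpolant $g_1\gg f\gg g$ produced by item~5 to separate and deduce $f\in p$; the reverse implication uses roundness. (Alternatively, existence can simply be quoted from Theorem~1, whose representation one then checks realises $\gg$.)

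For uniqueness, suppose $X$ and $X'$ both satisfy the conclusion. Each point $x\in X$ determines the round prime filter $F_x=\{f:x\in U(f)\}$, and conversely every round prime filter $p$ is realised by exactly one point of $X$: the family $\{\cl{U(f)}:f\in p\}$ has the finite intersection property among compacta, so $\bigcap_{f\in p}\cl{U(f)}\neq\emptyset$, while the Hausdorff property together with roundness forces this intersection to be a single point. Thus $x\mapsto F_x$ is a homeomorphism of $X$ onto the set of round prime filters of $(\LL,\le,\gg)$ carrying each $U(f)$ to $\{p:f\in p\}$, and the identical statement holds for $X'$; composing the two gives a homeomorphism $X\to X'$ compatible with both representations.

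The main obstacle I expect is twofold. First, translating the purely order-theoretic axioms into the topological facts---local compactness, compactness of the $\cl{U(f)}$, and Hausdorffness---in the Wallman manner, where item~6 must be made to do the separating work. Second, and above all, proving that each round prime filter meets $X$ in exactly one point; this single-valuedness is precisely where $\gg$ earns its keep, since it is what makes the notion of round filter, and hence the point set together with its topology, definable from the lattice data alone. The Example above shows that dropping $\gg$ destroys exactly this step, as two non-homeomorphic spaces then share the same underlying lattice $(\LL,\le)$.
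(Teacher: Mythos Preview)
The paper does not prove this statement. Theorem~\ref{sh2bien} is Shirota's Theorem~2, restated verbatim after the amended Definition~\ref{RL2} that incorporates $\gg$ into the R-lattice data; the paper's entire argument is the sentence ``With Definition~\ref{RL2} everything works and this result remains valid,'' which simply defers to Shirota's original proof. There is therefore no proof in the paper against which to compare your proposal.

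That said, your outline is the classical Wallman--Shirota construction and has the right shape: points as round prime filters, $U(f)$ the set of such filters containing $f$, and the six axioms unpacked one by one into the required topological properties of $X$. The uniqueness half via the bijection $x\mapsto F_x$ between points of any representing space and round prime filters is likewise the standard route. The two places you flag as delicate---producing enough round prime filters (the Zorn step must preserve roundness, which is where the interpolation axiom~(4) is used) and the single-valuedness of $\bigcap_{f\in p}\cl{U(f)}$---are indeed where the substance lies. Your closing diagnosis, that $\gg$ is precisely what pins the point set to the lattice data, is exactly the paper's reason for amending Shirota's definition in the first place.
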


\subsection{The Stone-\v Cech compactification of $\N$}\label{betaN}

We will analyse the isomorphism given in Proposition~\ref{UX0} when $Y=\N$ and 
$X=\beta\N$, the Stone-\v{C}ech compactification of $\N$. This is not going to 
lead to new results, but it seems to be interesting in spite of this. 
These are very different spaces, so it could be surprising the fact that 
they share the same lattice of regular open subsets. In any case, as $\N$ 
is discrete, every $V\subset\N$ is regular and this, along with 
Proposition~\ref{UX0}, implies that 
$$R(\beta\N)=\{\interior(\cl{U}):U\subset\N\}.$$
As $\beta\N$ is regular, every open $W\subset\beta\N$ is the union of its regular 
open subsets, and these regular open subsets are determined by the integers they 
contain, so $W$ is determined by a collection $\mathcal A_W$ of subsets of $\N$. 
Of course, if $W$ contains $\S(J)$ for some $J\subset\N$ and $I\subset J$ then 
$\S(I)\subset W$, too. This means that $\mathcal A_W$ is closed for inclusions. 
Furthermore, as $\cl{J}$ is open in $\beta\N$, 
see~\cite[p. 144, Subsection 3.9]{walker}, 
if $\S(I)\subset W$ and $\S(J)\subset W$ then $\S(I)\cup\S(J)=\S(I\cup J)$, 
so $\S(I\cup J)\subset W$ and $\mathcal A_W$ is closed for pairwise supremum. 
Summing up, $\mathcal A_W$ is an ideal of the lattice $\mathcal P(\N)$ 
for every proper open subset $\emptyset\subsetneq W\subsetneq\beta\N$
--and every $\S(J)\in R(\beta\N)$ is closed, so ``clopen" and ``regular open" 
are equivalent in $\beta\N$. 

It is also clear that every ideal $\mathcal A$ in $\P(\N)$ defines an open 
$W_\AA\in\beta\N$ as $\cup\{\S(I):{I\in\AA}\},$ that these identifications 
% $\AA_{W_\AA}=\AA$
are mutually inverse and that $W\subset V$ if and only if $\AA_W\subset \AA_V$, so 
each maximal ideal
in $\P(\N)$ defines a maximal open proper subset of $\beta\N$. As $\beta\N$
is Hausdorff, these maximal open subsets are exactly $\beta\N\setminus\{x\}$
for some $x$, so each point is dually defined by a maximal ideal. In other
words, every point in $\beta\N$ is associated to an ultrafilter in $\P(\N)$.

As our final comment in this {\em Just for fun} Remark, we have that 
$\beta\N$ is the only Hausdorff compactification of $\N$ that fulfils:
\begin{itemize}
\item[$\spadesuit$] If $J, I\subset\N$ are disjoint, then their closures
in the compactification are disjoint, too, 
\end{itemize}
\noindent although this is just a particular case of a result by \v{C}ech, 
see~\cite[p. 25-26]{walker}. 

\subsection{The hypotheses are minimal}\label{sbsback}

In some sense, Theorem~\ref{regulares} is optimal. Here we see that there is no 
way to generalise it if we omit any of the hypotheses. 

\begin{remark}\label{concof}
Consider any infinite set $Z$ endowed with the cofinite topology $\tau_{cof}$. 
It is clear that every pair of nonempty open subsets of $Z$ meet, so 
every nonempty open subset is dense in $Z$ and this implies that the only regular 
open subsets of $Z$ are $Z$ and $\emptyset.$ Of course the same applies to any 
uncountable set endowed with the cocountable topology $\tau_{con}$, so 
$(\R,\tau_{cof})$ and $(\R,\tau_{con})$ have the same regular open subsets. 
Nevertheless, there is no way to identify homeomorphically any couple of dense 
subsets of $\R$ with each topology. In order to avoid this pathological behaviour 
we needed to consider only regular Hausdorff spaces since these spaces are the only 
reasonable spaces for which the regular subsets comprise a base of the topology. 
In other words, Theorem~\ref{regulares} will not extend to general topological 
spaces. 
\end{remark}

\begin{remark}\label{Stone}
Consider $X=[0,1]$ endowed with its usual topology and let $Y$ be its Gleason 
cover. The lattices $R(X)$ and $R(Y)$ are canonically isomprphic, but it is 
well known that no point in $Y$ has a countable basis of neighbourhoods, so 
$$\bigcap_{x\in U\in\B_X}\T(U)=\bigcap_{n=1}^\infty\T(B(x,1/n))$$
is never a singleton. 

It is remarkable that~\cite[Example~1.7.16]{GleasonStoneBoole} is the {\em only} 
place where the author has been able to find a statement that explicitly confirms
that the Gleason cover of some compact space $K$ is the same topological space 
as the Stone space associated to $R(K)$, i.e., $G_K=\St(R(K))$.
\end{remark}

\begin{remark}\label{SCQI}
Consider $A=\Q\cap[0,1]$, $B=\Irr\cap[0,1]$ and their Stone–Čech compactifications 
$X=\beta A$, $Y=\beta B$. 
There is a lattice isomorphism between $R(X)$ and $R(Y)$, say $\T$, given by 
the composition of the following isomorphisms: 
\begin{eqnarray}
& U\in R(X)\mapsto  U\cap A\in R(A), 
& U\in R(A)\mapsto  \interior\overline{U}\in R([0,1]), \nonumber \\
& U\in R([0,1])\mapsto   U\cap B \in R(B), 
& U\in R(B)\mapsto  \interior\overline{U}\in R(Y). \nonumber
\end{eqnarray}
In spite of this, it is intuitively evident that 
% $$\bigcap_{x\in U\in\B_X}\T(U)=\bigcap_{n=1}^\infty\T\big(\interior\big(\,\cl{(x-1/n,x+1/n)}\,\big)\big)$$ 
$$\RR_X(x)=\bigcap_{x\in U\in\B_X}\T(U)=\bigcap_{n=1}^\infty\T\big(\interior\big(\,\cl{B(x,1/n)}\,\big)\big)$$ 
is never a singleton for $x\in A$, and 
$$\RR_Y(y)=\bigcap_{y\in V\in\B_Y}\T^{-1}(V)=\bigcap_{n=1}^\infty\T^{-1}\big(\interior\big(\,\cl{B(y,1/n)}\,\big)\big)$$
is neither a singleton when $y\in B$. 
It seems clear that $X=A\sqcup\{\RR_Y(y):y\in B\}$ and $Y=B\sqcup\{\RR_X(x):x\in A\}$, 
so there is no point in $X_0$. 
\end{remark}

\begin{remark}\label{QI}
There is no ``non-complete metric spaces" result. Indeed, $\Irr$ and $\Q$ have 
the obvious isomorphism between their bases of regular open subsets and they are, 
nevertheless, disjoint subsets in $\R$. This means that when trying to generalise
Theorem~\ref{regulares} the problem may come not only from the lack of separation 
of the topologies as in Remark~\ref{concof}, from the excess or points as in 
Remark~\ref{Stone} or from the points in $X$ not squaring with those in $Y$ as 
in Remark~\ref{SCQI} but also from the, so to say, lack of 
points in the spaces even when they are metric. 
\end{remark} 

\section{Acknowledgements}
Supported in part DGICYT project PID2019-103961GB-C21. 

It is a pleasure to thank Professor Denny~H.~Leung for his interest in the present 
paper and for pointing out a mistake. 

\bibliographystyle{abbrv}

\addcontentsline{toc}{chapter}{Bibliography}

\bibliography{BaseCompleteSpaces}

\end{document}